\documentclass[12pt,oneside]{amsart}

\pdfoutput=1

\usepackage[section]{placeins}
\usepackage[margin=1in]{geometry}
\usepackage{mathabx}
\usepackage{xcolor}
\usepackage{graphicx}
\usepackage{amssymb}
\usepackage{tikz}
\usepackage{latexsym}
\usepackage{epsfig}
\usepackage{enumitem}
\usepackage{multicol}
\usepackage{wasysym}
\usepackage{amsmath}
\usepackage{amsthm}
\usepackage{amscd}
\usepackage{ulem}
\usepackage{soul}
\usepackage{multirow}
\usepackage{rotating}
\usepackage{pdflscape}
\usepackage{float}
\usepackage{dsfont}
\usepackage{comment}

\usepackage[colorlinks,citecolor=red,pagebackref,hypertexnames=false]{hyperref}
\hypersetup{
  colorlinks = true,
  citecolor=red,
  linkcolor  = red
}

\setlist{itemsep=.06125in}

\restylefloat{table}
\numberwithin{equation}{section}

\theoremstyle{plain}
\newtheorem{theorem}{Theorem}[section]

\newtheorem{proposition}[theorem]{Proposition}

\theoremstyle{definition}
\newtheorem{definition}[theorem]{Definition}

\theoremstyle{remark}
\newtheorem{remark}[theorem]{Remark}

\DeclareMathOperator{\supp}{supp}

\date{\today}

\author{S. Deodhar and A. Iosevich}
\address{Department of Mathematics, University of Rochester, Rochester, NY, USA}
\email{sadeodhar99@gmail.com}
\address{Department of Mathematics, University of Rochester, Rochester, NY, USA}
\email{iosevich@gmail.com}

\thanks{A. I. was supported in part by the National Science Foundation under NSF DMS - 2154232.}

\title{Spectral synthesis with the complexity parameter}

\begin{document}

\begin{abstract}
We show that spectral synthesis thresholds are governed by a quantitative spectral complexity parameter, the Fourier Ratio, in addition to the geometric size of the Fourier support. In the Euclidean setting, we prove that if a compactly supported measure has finite $\alpha$-dimensional packing measure and the associated Fourier ratio decays with asymptotic exponent $\kappa$, then the classical synthesis threshold improves from $\frac{2d}{\alpha}$ to $\frac{2(d-2\kappa)}{\alpha-2\kappa}$. We then establish an analogous result on compact Riemannian manifolds without boundary. In that setting the relevant object is a localized spectral Fourier ratio defined using Laplace--Beltrami spectral projectors. The resulting synthesis threshold is again determined by the decay exponent of this complexity parameter. These results place Euclidean and manifold spectral synthesis into a common framework in which geometric size and spectral complexity jointly govern uniqueness.
\end{abstract}

\subjclass[2020]{Primary 42B10; Secondary 42B37, 58J40, 35A02}

\keywords{spectral synthesis, Fourier ratio, Fourier transforms of measures, spectral multipliers, Laplace--Beltrami operator, harmonic analysis}

\maketitle

\section{Introduction}

Agranovsky and Narayanan (\cite{AN04}) proved the following theorem that harkens back to similar results proved by Agmon-Hormander (\cite{AH76}) and others. See, for example, \cite{ABCP94}, \cite{ABK96}, and \cite{AK11} for similar ideas and concepts. They proved that if $f \in L^1_{loc}({\Bbb R}^d)$, $\widehat{f}$ is supported in a $k$-dimensional manifold, and $f \in L^p({\Bbb R}^d)$ for $p \leq \frac{2d}{k}$, then $f$ is identically $0$. We shall refer to this as a spectral synthesis problem.

Spectral synthesis problems can be viewed as quantitative uniqueness principles, closely connected to classical questions in partial differential equations. In the work of Agmon and H\"ormander (\cite{AH76}) and subsequent developments, conditions on the support of the Fourier transform are used to force rigidity and uniqueness of solutions. From this perspective, classical synthesis thresholds describe a balance between the geometric size of the Fourier support and the analytic integrability of the underlying object; see, for example, \cite{AN04,SR14}. A central theme in the subject is that additional structure in the Fourier support leads to stronger rigidity, often beyond what is predicted purely by dimension. The purpose of the present work is to show that such improvements can be captured systematically by a quantitative spectral complexity parameter.

Senthil Raani (\cite{SR14}) extended this result to $\widehat{f}$ supported in a compact subset of ${\Bbb R}^d$ of box dimension $s$ (not necessarily an integer), proving that if $f \in L^p({\Bbb R}^d)$ for $p \leq \frac{2d}{s}$, then $f$ is identically $0$. She used Salem's examples of sets with optimal Fourier decay to show that the $\frac{2d}{s}$ exponent above is, in general, sharp. However, the situation is quite different in the realm of manifolds. In the case when $d=3$ and the underlying manifold is the curve, say $\{(t,t^2,t^3): t \in [0,1]\}$, one can improve the critical exponent $p_0=\frac{2d}{k}=6$ to $p_0=7$. To be precise, it was shown by Agranovsky, Brandolini, and Iosevich (\cite{ABI20}) that if $\widehat{f}$ is supported on this curve and $f \in L^p({\Bbb R}^3)$ for $p \leq 7$, then $f$ is identically $0$. Using a more refined approach, based on the ideas from restriction and decoupling theory, Guo, Iosevich, Zhang and Zorin-Kranich (\cite{GIZZ23}) were able to prove the corresponding sharp $L^p$ threshold for small perturbations of the curve $\{(t,t^2, \dots, t^d)\}$ in ${\mathbb R}^d$.

One of the interesting aspects of spectral synthesis results is that they depend not only on the dimension but also on the geometric structure of the underlying set. This is already apparent in the case of the moment curve described above. But even in the relatively simple case when $d=2$ and $s=1$, the role of the structure of the underlying measure is quite pronounced. If the Fourier transform is supported on the unit circle, then the Agranovsky/Narayanan/Senthil Raani exponent, $\frac{2d}{s}=4$, is sharp, as can be easily seen by noting that the Fourier transform of the arc-length measure on the unit circle is in $L^p({\mathbb R}^2)$ if and only if $p>4$. On the other hand, if the Fourier transform is supported on a finite line segment, then it is not difficult to see that the sharp exponent is $p=\infty$, a much stronger conclusion than the one guaranteed by the Agranovsky/Narayanan/Senthil Raani results.

The purpose of this paper is to show that spectral synthesis thresholds are governed not only by the geometric size of the support of the Fourier transform but also by a quantitative spectral complexity parameter. This parameter, the Fourier Ratio, measures the concentration of Fourier mass at a given scale and enters directly into the analytic estimates governing spectral synthesis.

While it has long been understood that additional structure in the Fourier support can improve classical dimension-based thresholds, such improvements have typically been qualitative and tied to specific geometric configurations. In contrast, the Fourier Ratio provides a quantitative and scale-dependent invariant that interpolates continuously between diffuse and highly concentrated spectral behavior. The main result of this paper shows that the sharp synthesis exponent is an explicit function of this complexity parameter.

More broadly, the Fourier Ratio may be viewed as a spectral complexity parameter that refines classical analytic inequalities. In several settings in harmonic analysis and signal recovery, sharp exponents are known to improve in the presence of additional structure, but a general quantitative mechanism governing this improvement has been largely absent. The results of this paper indicate that the Fourier Ratio provides such a mechanism, with the decay exponent of this quantity directly determining the range of admissible exponents.

These results suggest a general principle: sharp analytic exponents are governed not solely by geometric size but by a combination of size and spectral complexity. In this framework, the Fourier Ratio plays a role analogous to a dimension, but one that reflects concentration rather than spatial extent. The synthesis threshold becomes a function of this parameter, providing a continuous interpolation between classical dimension-driven bounds and rigid, highly structured cases.

The role of the Fourier Ratio can be interpreted as measuring the deviation from the trivial $L^1$–$L^2$ inequality for localized Fourier transforms. In classical synthesis arguments one uses the bound
\[
\|\widehat{(f\mu)*\psi_{R^{-1}}}\|_1 \lesssim R^{\frac{d}{2}}\|\widehat{(f\mu)*\psi_{R^{-1}}}\|_2,
\] where $\psi_{R^{-1}}$ is the approximation to the identity at level $R^{-1}$. 

The Fourier Ratio replaces this with the sharper estimate
\[
\|\widehat{(f\mu)*\psi_{R^{-1}}}\|_1 \lesssim R^{\frac d2-\kappa}\|\widehat{(f\mu)*\psi_{R^{-1}}}\|_2 ,
\]
where $\kappa$ measures the spectral complexity of the signal. This improvement propagates through the interpolation argument and produces the modified synthesis exponent appearing in Theorem \ref{thm:FR_synthesis}.

The Fourier Ratio considered here is closely related to the quantity introduced in our work on spectral synthesis on compact Riemannian manifolds \cite{IMW2026}. In that setting it governs approximation by short spectral sums and scale-dependent uncertainty principles. The present paper shows that the same parameter also governs spectral synthesis thresholds.

\begin{figure}[ht]
\centering
\begin{tikzpicture}[scale=0.8]

% ---------- LEFT PANEL ----------
\begin{scope}
\draw[->, thick] (0,0) -- (3.5,0) node[right] {$\xi$};
\draw[->, thick] (0,0) -- (0,3.0);

\draw[thick, smooth]
plot coordinates {(0.2,0.2) (0.6,0.7) (1.0,1.2) (1.4,0.9) (1.8,1.4) (2.2,0.8) (2.6,1.0) (3.0,0.3)};

\node at (1.7,2.75) {\small diffuse Fourier mass};
\node at (1.7,-0.55) {\small small $\kappa$};
\end{scope}

% ---------- RIGHT PANEL ----------
\begin{scope}[xshift=5.2cm]
\draw[->, thick] (0,0) -- (3.5,0) node[right] {$\xi$};
\draw[->, thick] (0,0) -- (0,3.0);

\draw[thick, smooth]
plot coordinates {(0.2,0.1) (0.8,0.15) (1.4,0.2) (1.7,2.0) (2.0,0.2) (2.6,0.15) (3.2,0.1)};

\node at (1.7,2.75) {\small concentrated Fourier mass};
\node at (1.7,-0.55) {\small large $\kappa$};
\end{scope}

\end{tikzpicture}

\caption{A schematic illustration of what the Fourier Ratio measures. Diffuse Fourier mass (left) corresponds to a small Fourier-ratio decay exponent $\kappa$, while strong concentration of Fourier mass (right) corresponds to larger $\kappa$ and stronger synthesis conclusions.}
\end{figure}
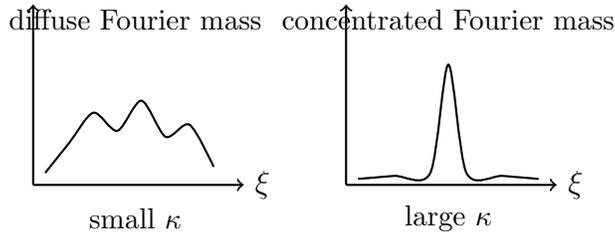

\subsection{Spectral synthesis in ${\mathbb R}^d$}

Let $\mu$ be a compactly supported Borel measure on $\mathbb{R}^d$ and $f \in L^2(\mu)$. Let $\psi$ be a smooth, compactly supported approximation to the identity with $\int \psi = 1$, and define $\psi_{\delta}(x) = \delta^{-d} \psi(x/\delta)$. We study the regularized $L^p$ norms of the Fourier transform at scale $R$:
\begin{equation}
\label{eq:Xdef}
X_{p,\mu,R}(f) := \bigg( R^{-d} \int_{\mathbb R^d} \big| \widehat{(f\mu) * \psi_{R^{-1}}}(\xi) \big|^p d\xi \bigg)^{1/p}, \quad p \in [1,\infty).
\end{equation}
The Fourier Ratio of $f$ with respect to $\mu$ at scale $R$ is defined by
\begin{equation}
\label{eq:FRdef}
FR_{\mu,R}(f) := \frac{X_{1,\mu,R}(f)}{X_{2,\mu,R}(f)}.
\end{equation}
This quantity is well-defined for all $f \in L^2(\mu)$ and couples the spectral concentration of $f\mu$ to the $R^{-1}$-scale geometry of $\operatorname{spt}(\mu)$. One of the fundamental properties of the Fourier Ratio is captured by the following result from (\cite{ILPY25}). 

\begin{proposition} \label{theorem:fractalsandwich} Suppose that $\widehat{{(f\mu)}_{R^{-1}}} \equiv \widehat{(f\mu)*\psi_{R^{-1}}}$ is $L^1$-concentrated in $X \subset {\mathbb R}^d$ in the sense that for some $\eta \in (0,1)$, 
\begin{equation} \label{eq:L1concentration} {||\widehat{f\mu_{R^{-1}}}||}_{L^1(X_R^c)} \leq \eta \cdot {||\widehat{f\mu_{R^{-1}}}||}_{L^1},\end{equation} where $X_R=X \cap B_{100R}$. Then 
\begin{equation} \label{eq:FRsandwich} \sqrt{\frac{1}{R^d|E_f^{\frac{1}{R}}|}} \leq FR(f\mu) \leq 
\sqrt{\frac{|X_R|}{R^d {(1-\eta)}^2}}, \end{equation} where $E_f^{R^{-1}}$ is the $R^{-1}$-neighborhood of the support of $f\mu$. 

It follows that 
\begin{equation} \label{eq:fractaluncertainty} {(1-\eta)}^2 \leq |E_f^{R^{-1}}| \cdot |X_R|. \end{equation}  

\vskip.125in 

If we assume that there exist $c, C_X$ universal constants, and $s_f, \alpha_X \in (0,d)$, such that 
\begin{equation} \label{eq:porosity} |X_R| \leq C_X R^{\alpha_X}, \end{equation} and 
\begin{equation} \label{eq:sizespacesupport} |E_f^{R^{-1}}| \leq cR^{-d+s_{f}}, \end{equation} for all large enough $R$, then in order to ensure the condition (\ref{eq:fractaluncertainty}) for all large enough $R$, we can ask for the stronger condition 
$$ d < s_{f}+\alpha_X.$$
\end{proposition}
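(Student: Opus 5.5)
The sandwich (\ref{eq:FRsandwich}) consists of two separate estimates: an upper bound for $\|g\|_1/\|g\|_2$ and a lower bound for it, where we write $g=\widehat{(f\mu)*\psi_{R^{-1}}}$. Throughout, $FR(f\mu)=R^{-d/2}\|g\|_1/\|g\|_2$, since $X_{1}=R^{-d}\|g\|_1$ and $X_2=R^{-d/2}\|g\|_2$. Once both bounds are in hand, (\ref{eq:fractaluncertainty}) follows by chaining them, and the last assertion is pure exponent bookkeeping.

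\emph{Upper bound.} We split $\|g\|_1=\|g\|_{L^1(X_R)}+\|g\|_{L^1(X_R^c)}$. The concentration hypothesis (\ref{eq:L1concentration}) gives $(1-\eta)\|g\|_1\le \|g\|_{L^1(X_R)}$. Cauchy--Schwarz on $X_R$ then gives $\|g\|_{L^1(X_R)}\le |X_R|^{1/2}\|g\|_2$. Hence $\|g\|_1/\|g\|_2\le |X_R|^{1/2}/(1-\eta)$, and multiplying by $R^{-d/2}$ gives the right-hand inequality.

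\emph{Lower bound.} We want $\|g\|_2^2\le |E_f^{R^{-1}}|\,\|g\|_1^2$, which after multiplying by $R^{-d/2}$ yields the left-hand side.
\begin{itemize}
\item The function $h=(f\mu)*\psi_{R^{-1}}$ is supported in $E_f^{R^{-1}}$, the $R^{-1}$-neighborhood of $\operatorname{spt}(f\mu)$, because $\psi$ is supported in a unit ball. If $\psi$ is supported in a ball of radius $C$, we either take neighborhoods of radius $CR^{-1}$ or normalize $\operatorname{spt}\psi\subset B(0,1)$.
\item By Plancherel, $\|g\|_2^2=\|h\|_2^2=\int_{E_f^{R^{-1}}}|h|^2$.
\item Fourier inversion gives the pointwise bound $|h(x)|\le \|g\|_1$.
\item Therefore $\|h\|_2^2\le |E_f^{R^{-1}}|\,\|g\|_1^2$.
\end{itemize}
This is the step I expect to need the most care. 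We must confirm that $g\in L^1$, which holds because $\widehat{\psi}(\xi/R)$ is Schwartz and $\widehat{f\mu}$ is bounded since $f\mu$ is a finite measure. We must also track the $2\pi$ normalization in Plancherel, which affects only constants.

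\emph{Consequences.} Comparing the two sides of (\ref{eq:FRsandwich}) gives $1/(R^d|E|)\le |X_R|/(R^d(1-\eta)^2)$, which is (\ref{eq:fractaluncertainty}). For the final claim, (\ref{eq:porosity}) and (\ref{eq:sizespacesupport}) give $|E_f^{R^{-1}}|\,|X_R|\le cC_X R^{s_f+\alpha_X-d}$. If $s_f+\alpha_X<d$, this tends to $0$ and contradicts (\ref{eq:fractaluncertainty}) for large $R$. So compatibility with (\ref{eq:fractaluncertainty}) requires $s_f+\alpha_X\ge d$. The strict inequality $d<s_f+\alpha_X$ makes the upper bound grow, and is therefore a sufficient, stronger condition under which the necessary inequality cannot fail for large $R$ because of these size bounds.
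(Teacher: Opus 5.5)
Your proof is correct and is the standard argument behind this sandwich: the upper bound comes from the concentration hypothesis plus Cauchy--Schwarz on $X_R$, and the lower bound from Plancherel, the pointwise bound $\|h\|_{L^\infty}\le\|\widehat{h}\|_{L^1}$, and $\operatorname{supp} h\subset E_f^{R^{-1}}$ for $h=(f\mu)*\psi_{R^{-1}}$. Your reading of the informal last clause as a compatibility condition (the size bounds force $s_f+\alpha_X\ge d$, and strict inequality avoids any dependence on the constants) is the intended one; note that the paper does not reprove this proposition and simply imports it from \cite{ILPY25}.
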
 

To state the main result of this paper, we shall need the following notion of the packing content. 

\begin{definition}
For $E \subset \mathbb{R}^d$ and $\alpha \ge 0$, let $\mathcal{P}^{\alpha}(E)$ denote the $\alpha$-dimensional packing measure of $E$. We refer the reader to \cite{SR14} for the definition. The only property we shall use is that if $\mathcal{P}^{\alpha}(E)<\infty$, then Proposition 2.1 of \cite{SR14} implies
\[
|E^\delta| \lesssim \delta^{d-\alpha}
\]
for all sufficiently small $\delta>0$, where $E^\delta$ denotes the $\delta$-neighborhood of $E$.
\end{definition}

Throughout the paper we write $A \lesssim B$ to mean that $A \le C B$ for a constant $C>0$ independent of the relevant parameters.

Our first result is the following. 

\begin{theorem}\label{thm:FR_synthesis}
Let $\mu$ be a positive Borel measure on $\mathbb{R}^d$ such that
\[
\mathcal{P}^{\alpha}(\operatorname{supp}(\mu))<\infty,
\]
and let $f \in L^2(\mu)$. Assume that $\widehat{f\mu} \in L^p(\mathbb{R}^d)$.

Define the Fourier-ratio decay exponent
\[
\kappa(f)
=
\liminf_{R\to\infty}
\frac{-\log FR_{\mu,R}(f)}{\log R}.
\]

If
\[
0 \le \kappa(f) \le \frac{\alpha}{2},
\]
then
\[
f \equiv 0
\]
provided
\[
2 \le p < \frac{2(d-2\kappa(f))}{\alpha-2\kappa(f)}.
\]
\end{theorem}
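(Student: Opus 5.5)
The plan is to argue by contradiction: assume $f \not\equiv 0$, so that $\|f\mu\|_{TV}=\int |f|\,d\mu>0$. We then compare a lower bound and an upper bound for $\|\widehat{g_R}\|_2$ as $R\to\infty$, where $g_R:=(f\mu)*\psi_{R^{-1}}$. The lower bound comes from the geometric size of $\operatorname{supp}(\mu)$. The upper bound comes from $\widehat{f\mu}\in L^p$ together with the Fourier Ratio, which replaces the trivial $L^1$–$L^2$ comparison. Throughout we take $\psi\ge 0$ and $\operatorname{supp}\psi\subset B_1$. We assume that $\mu$ is finite, as is implicit in the setting: $\mu$ is compactly supported, and a finite $\alpha$-packing measure of the support forces boundedness. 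Then $f\in L^2(\mu)\subset L^1(\mu)$, and $g_R$ is an honest $L^1\cap L^2$ function.

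\emph{Lower bound.} The function $g_R$ is supported in $E^{R^{-1}}$, the $R^{-1}$-neighborhood of $E=\operatorname{supp}(\mu)$. By the packing-measure property quoted in the Definition, $|E^{R^{-1}}|\lesssim R^{\alpha-d}$. Cauchy--Schwarz and Plancherel then give
\[
\|g_R\|_1^2\le |E^{R^{-1}}|\,\|g_R\|_2^2\lesssim R^{\alpha-d}\|\widehat{g_R}\|_2^2 .
\]
Since $g_R\,dx\to f\mu$ weakly and $\psi\ge0$, one has $\liminf_R\|g_R\|_1\ge \|f\mu\|_{TV}>0$ by lower semicontinuity of total variation. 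Hence $\|\widehat{g_R}\|_2^2\gtrsim_f R^{d-\alpha}$ for all large $R$.

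\emph{Upper bound.} First, unwind the definition of the Fourier Ratio: $FR_{\mu,R}(f)=R^{-d/2}\|\widehat{g_R}\|_1/\|\widehat{g_R}\|_2$. By the definition of $\kappa=\kappa(f)$ as a $\liminf$, for every $\varepsilon>0$ and all large $R$,
\[
\|\widehat{g_R}\|_1\le R^{\frac d2-\kappa+\varepsilon}\|\widehat{g_R}\|_2 .
\]
Second, write $\widehat{g_R}(\xi)=\widehat{f\mu}(\xi)\widehat\psi(\xi/R)$ with $|\widehat\psi|\le 1$. Applying H\"older with exponents $p,p'$ gives
\[
\|\widehat{g_R}\|_2^2=\int \widehat{f\mu}\,\widehat\psi(\cdot/R)\,\overline{\widehat{g_R}}\le \|\widehat{f\mu}\|_p\,\|\widehat{g_R}\|_{p'} .
\]
Third, interpolate $L^{p'}$ between $L^1$ and $L^2$ (here $p\ge2$ is used) with $\theta=1-\frac2p$:
\[
\|\widehat{g_R}\|_{p'}\le\|\widehat{g_R}\|_1^{\theta}\|\widehat{g_R}\|_2^{1-\theta}\le R^{(1-\frac2p)(\frac d2-\kappa+\varepsilon)}\|\widehat{g_R}\|_2 .
\]
Combining the three steps yields
\[
\|\widehat{g_R}\|_2\le \|\widehat{f\mu}\|_p\, R^{(1-\frac2p)(\frac d2-\kappa+\varepsilon)} .
\]

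\emph{Comparison.} Putting the two bounds together gives
\[
R^{\frac{d-\alpha}{2}}\lesssim_f R^{(1-\frac2p)(\frac d2-\kappa+\varepsilon)}
\]
for all large $R$. This is a contradiction as $R\to\infty$ whenever $\frac{d-\alpha}{2}>(1-\frac2p)(\frac d2-\kappa)$, after choosing $\varepsilon$ small. This inequality is equivalent to $\frac2p>\frac{\alpha-2\kappa}{d-2\kappa}$, that is, $p<\frac{2(d-2\kappa)}{\alpha-2\kappa}$. Dividing by $d-2\kappa$ is legitimate because $\kappa\le\alpha/2<d/2$. The strictness of the inequality on $p$ is exactly what absorbs the $\varepsilon$-loss coming from the $\liminf$.

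The step I expect to need the most care is the lower bound, specifically the passage $\liminf\|g_R\|_1\ge\|f\mu\|_{TV}$. This requires a mild regularity convention: $\mu$ finite, $\psi\ge0$, and weak convergence tested against $C_c$ functions. The remaining delicate point is checking that the normalizations $R^{-d}$ and $R^{-d/2}$ in $X_{1,\mu,R}$ and $X_{2,\mu,R}$ translate exactly into the factor $R^{\frac d2-\kappa}$. If the edge case $\kappa=\alpha/2$ makes the threshold infinite, the same argument works verbatim for every finite $p\ge2$.
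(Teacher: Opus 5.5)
Your proof is correct and follows essentially the same route as the paper. Both arguments combine the Fourier-ratio bound $\|\widehat{g_R}\|_1\le R^{\frac d2-\kappa+\varepsilon}\|\widehat{g_R}\|_2$, log-convexity interpolation giving $\|\widehat{g_R}\|_2\lesssim R^{(1-\frac2p)(\frac d2-\kappa+\varepsilon)}\|\widehat{f\mu}\|_p$, and Cauchy--Schwarz on the $R^{-1}$-neighborhood with $|E^{R^{-1}}|\lesssim R^{\alpha-d}$. The differences are cosmetic: the paper pairs $g_R$ with a fixed test function $\phi$ and shows $\langle f\mu,\phi\rangle=0$ directly, rather than arguing by contradiction through lower semicontinuity of total variation, and it interpolates $L^2$ between $L^1$ and $L^p$ instead of using H\"older with $L^{p'}$.
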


\subsection{Sharpness of results} The exponent in Theorem \ref{thm:FR_synthesis} interpolates between the classical synthesis threshold and the rigid case of maximal spectral concentration. When $\kappa(f)=0$, the theorem reduces to the Agranovsky--Narayanan--Senthil Raani exponent $p<\frac{2d}{\alpha}$. This happens, for example, when $\mu$ is supported on a compact piece of a smooth hypersurface with {\bf non-vanishing Gaussian curvature}. The case $\kappa(f)=0$ also often occurs if $\mu$ is a result of a {\bf random construction}, such as the examples constructed by Senthil Raani in \cite{SR14}. As $\kappa(f)$ increases, the synthesis threshold improves. In the limiting case $\kappa(f)=\frac{\alpha}{2}$, the bound formally becomes $p<\infty$, reflecting the rigid behavior that occurs when the Fourier transform is supported on highly structured sets such as line segments. In particular, when $\kappa(f)=0$ the classical exponent $p<\frac{2d}{\alpha}$ is recovered, which is known to be sharp in general by examples of Salem-type sets. At the opposite extreme $\kappa(f)=\frac{\alpha}{2}$ the bound becomes $p<\infty$, corresponding to the rigid situation where the Fourier support exhibits maximal spectral concentration. This happens, for example, when $\mu$ is supported on a compact piece of {\bf a $k$-dimensional plane}. 

\begin{figure}[ht]
\centering
\begin{tikzpicture}[scale=0.9]

% axes
\draw[->, thick] (0,0) -- (6.2,0) node[right] {$\kappa$};
\draw[->, thick] (0,0) -- (0,4.6) node[above] {$p$};

% origin
\node[below left] at (0,0) {\small $0$};

% classical threshold mark
\draw[dashed] (0,1.4) -- (1.1,1.4);
\node[left] at (0,1.4) {\small $\frac{2d}{\alpha}$};

% alpha/2 mark
\draw (5.0,0.1) -- (5.0,-0.1);
\node[below] at (5.0,-0.1) {\small $\frac{\alpha}{2}$};

% vertical asymptote guide
\draw[dashed] (5.0,0) -- (5.0,4.3);

% increasing threshold curve
\draw[thick, smooth]
plot coordinates {(0,1.4) (0.8,1.5) (1.6,1.7) (2.4,2.0) (3.2,2.5) (4.0,3.2) (4.6,4.0)};

% labels
\node at (3.4,3.9) {\small $p=\frac{2(d-2\kappa)}{\alpha-2\kappa}$};

\end{tikzpicture}

\caption{The synthesis threshold as a function of the Fourier-ratio decay exponent $\kappa$. When $\kappa=0$, one recovers the classical exponent $\frac{2d}{\alpha}$. As $\kappa$ increases, the allowable range of $p$ enlarges, and the threshold diverges as $\kappa \to \frac{\alpha}{2}$.}
\end{figure}
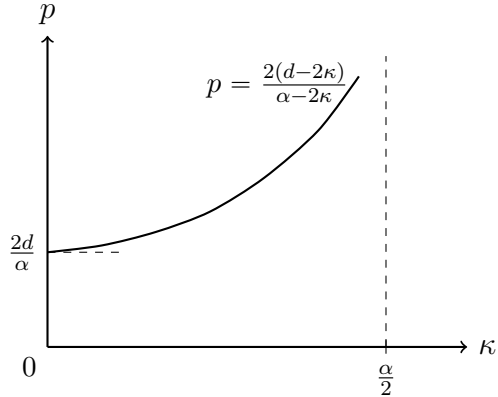

It follows from (\ref{eq:FRsandwich}) that 
\begin{equation} \label{eq:geometry} FR(f\mu) \ge \frac{1}{\sqrt{\# \ \text{of balls of radius} \ R^{-1} \ \text{needed to cover the support of} \ f\mu}}. \end{equation} Please note that this lower bound is universal. Unlike the upper bound in (\ref{eq:FRsandwich}) above, it does not depend on the concentration assumption. 

If the Minkowski dimension of the support of $f\mu$ exists and is equal to $\alpha$, (\ref{eq:geometry}) implies that $FR(f\mu) \ge R^{-\frac{\alpha}{2}}$, consistently with the formulation of Theorem \ref{thm:FR_synthesis} above. In particular, this interpretation shows that the Fourier-ratio decay exponent $\kappa$ measures an effective dimension of spectral concentration, interpolating between diffuse and highly structured Fourier support. From this perspective, Theorem \ref{thm:FR_synthesis} can be viewed as a refinement of classical dimension-based synthesis results in which the relevant dimension is not purely geometric but reflects both size and spectral concentration.

Previously, we noted sharpness examples for the Fourier ratio at the two extremes $\kappa(f)=0$ and $\kappa(f)=\frac{\alpha}{2}$, corresponding to curvature profiles ranging from completely non-degenerate surfaces to flat hyperplanes. This suggests that intermediate Fourier ratios should naturally capture intermediate curvature properties. Here, we present a class of such examples for measures supported on a hypersurface. Consider a hypersurface $M$ where the number of non-vanishing principal curvatures is constant (also known as a hypersurface with constant relative nullity). Using the methods from (\cite{Guo93}) and (\cite{Muller82}), one can show that if $M$ has exactly $k$ non-vanishing principal curvatures at every point, then
$$\mathrm{FR}_{\mu,R}(f) \lesssim R^{-\frac{d-1-k}{2}}$$ if $f$ is smooth compactly supported function on the support of $\mu$. Plugging this into the exponent of Theorem \ref{thm:FR_synthesis}, we conclude that $f \equiv 0$ provided $2 \leq p < \frac{2(k+1)}{k}$. This is sharp up to the endpoint, as evident from \cite{Guo93}. Using this we show  that Fourier-ratio decay exponent  $\kappa(f)$ for a compactly supported surface measure  on such manifold is equal to $\frac{d-1-k}{2}$. This will be explored for a general class of such manifolds in higher codimension in \cite{D26}.

\vskip.125in 

\subsection{Spectral synthesis on Riemannian manifolds}

An analogous phenomenon holds on compact Riemannian manifolds without boundary, but the relevant complexity parameter must now be formulated in terms of localized spectral projectors rather than the Euclidean Fourier transform.

The following theorem should be viewed as the spectral analogue of Theorem \ref{thm:FR_synthesis}. In this setting, the condition
\[
\sum_\lambda \|E_\lambda u\|_{L^2(M)}^p < \infty
\]
plays the role of the Euclidean assumption $\widehat{f\mu}\in L^p(\mathbb R^d)$.

\begin{definition}[Localized spectral Fourier ratio]
Let $(M,g)$ be a compact $d$-dimensional Riemannian manifold without boundary, and let $E_\lambda$ denote the orthogonal projection onto the $-\lambda^2$ Laplace--Beltrami eigenspace. Let $\psi \in \mathcal S(\mathbb R)$ be even, with $\widehat{\psi}$ compactly supported and $\psi(0)=1$.

For a Radon measure $u$ on $M$ and for $R \ge 1$, define
\[
A_{1,R}(u)
:=
R^{-d}\sum_\lambda |\psi(\lambda/R)|\,\|E_\lambda u\|_{L^2(M)},
\]
and
\[
A_{2,R}(u)
:=
R^{-d/2}
\left(
\sum_\lambda |\psi(\lambda/R)|^2\,\|E_\lambda u\|_{L^2(M)}^2
\right)^{1/2}.
\]
The localized Fourier Ratio of $u$ at scale $R$ is
\[
FR_{M,R}(u)
:=
\frac{A_{1,R}(u)}{A_{2,R}(u)}.
\]
Its decay exponent is defined by
\[
\kappa_M(u)
:=
\liminf_{R\to\infty}
\frac{-\log FR_{M,R}(u)}{\log R}.
\]
\end{definition}

\begin{definition}[Neighborhood growth condition]
Let $E \subset M$ be compact. We say that $E$ satisfies the $k$-dimensional neighborhood growth condition if there exists a constant $C_E>0$ such that
\[
|E^\delta| \le C_E \delta^{d-k}
\]
for all sufficiently small $\delta>0$, where $E^\delta$ denotes the $\delta$-neighborhood of $E$ and $|\cdot|$ denotes Riemannian volume.
\end{definition}

In earlier work of Iosevich, Mayeli, and Wyman \cite{IMW2026}, it was proved that if $u$ is supported in a compact set $E \subset M$ satisfying the $k$-dimensional neighborhood growth condition and $\sum_\lambda \|E_\lambda u\|_{L^2(M)}^p<\infty$ for $p \le \frac{2d}{k}$, then $u \equiv 0$. The theorem below should be viewed as a Fourier-ratio refinement of this result.

\begin{theorem}[Spectral synthesis with the manifold Fourier-ratio parameter]\label{thm:manifold-FR-synthesis}
Let $(M,g)$ be a compact $d$-dimensional Riemannian manifold without boundary. Let $u$ be a complex Radon measure supported in a compact set $E \subset M$ satisfying the $k$-dimensional neighborhood growth condition for some $0<k<d$. Assume that
\[
\sum_\lambda \|E_\lambda u\|_{L^2(M)}^p < \infty
\]
for some $p \ge 2$.

If
\[
0 \le \kappa_M(u) \le \frac{k}{2},
\]
then
\[
u \equiv 0
\]
provided
\[
2 \le p < \frac{2(d-2\kappa_M(u))}{k-2\kappa_M(u)}.
\]
\end{theorem}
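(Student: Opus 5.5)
We follow the scheme of Theorem \ref{thm:FR_synthesis}, with the Euclidean Fourier transform replaced by the spectral projectors $E_\lambda$. The Euclidean mollification $\psi_{R^{-1}}$ is replaced by the smoothed spectral multiplier $u_R:=\psi(\sqrt{-\Delta}/R)u=\sum_\lambda \psi(\lambda/R)E_\lambda u$. The idea is to pair $u$ with $u_R$ and compute $\langle u,u_R\rangle$ in two ways: spectrally and spatially.

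\textbf{Spectral side.} By orthogonality,
\[
\langle u_R,u\rangle=\sum_\lambda \psi(\lambda/R)\|E_\lambda u\|_2^2 .
\]
Using the Fourier-ratio hypothesis, we have $\sum_\lambda|\psi(\lambda/R)|\,\|E_\lambda u\|_2\lesssim R^{d/2-\kappa+\epsilon}A_{2,R}(u)$ along a sequence $R\to\infty$, for any $\epsilon>0$ and $\kappa=\kappa_M(u)$. We interpolate this $\ell^1$-type bound against the assumed $\ell^p$ control. By H\"older in $\lambda$ with weights $\|E_\lambda u\|_2$, there is $\theta\in(0,1)$, determined by $\tfrac12=\tfrac{1-\theta}{1}+\tfrac{\theta}{p}$, such that
\[
\sum_\lambda|\psi(\lambda/R)|^2\|E_\lambda u\|_2^2 \le \Big(\sum|\psi|\,\|E_\lambda u\|\Big)^{2(1-\theta)}\Big(\sum|\psi|^{p}\|E_\lambda u\|^p\Big)^{2\theta/p}.
\]
Combined with the Fourier-ratio bound, this gives an upper bound for $A_{2,R}(u)^2$. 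The bound is small unless $p$ exceeds the threshold; the $\ell^p$ tail $\sum_{\lambda\gtrsim R}\|E_\lambda u\|^p\to0$ supplies the little-$o$.

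\textbf{Spatial side.} Because $\widehat\psi$ is compactly supported, finite propagation speed shows that the kernel of $\psi(\sqrt{-\Delta}/R)$ is supported, up to rapidly decaying tails, where $d_g(x,y)\lesssim R^{-1}$. Its size is $\lesssim R^{d}$ by the local Weyl law. Hence $u_R$ is essentially supported in $E^{CR^{-1}}$, and
\[
|\langle u_R,u\rangle| \le \|u_R\|_{L^2(E^{CR^{-1}})}^{\,\cdot}\ \text{(dual pairing)}.
\]
Combined with $|E^{\delta}|\le C_E\delta^{d-k}$ and Cauchy--Schwarz, this gives an inequality of the form
\[
\sum_\lambda\psi(\lambda/R)\|E_\lambda u\|^2 \lesssim |E^{CR^{-1}}|^{1/2-1/p}\Big(\ldots\Big),
\]
which is exactly the manifold analogue of the Euclidean step. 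The factor $R^{-(d-k)(\cdot)}$ comes from the neighborhood volume, and the factor $R^{d/2-\kappa}$ replaces the trivial $R^{d/2}$.

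\textbf{Conclusion.} Tracking exponents, the two sides give
\[
\sum_\lambda \psi(\lambda/R)\|E_\lambda u\|^2\lesssim R^{\gamma(p)+\epsilon}\,o(1),
\]
where $\gamma(p)\le0$ precisely when $p\le \frac{2(d-2\kappa)}{k-2\kappa}$; this reproduces the algebra of Theorem \ref{thm:FR_synthesis} with $\alpha$ replaced by $k$. Since $p$ lies strictly below the threshold, we can choose $\epsilon$ so that $\gamma(p)+\epsilon<0$. Letting $R\to\infty$, the left side tends to $\sum_\lambda\|E_\lambda u\|^2$ (using $\psi(0)=1$, with monotone or dominated convergence applied to the positive part). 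Hence $E_\lambda u=0$ for all $\lambda$, so $u\equiv0$ by completeness of the eigenfunctions, since $u$ is a distribution.

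\textbf{Main obstacle.} The main difficulty is the spatial localization step. We need $u_R$ to be concentrated on $E^{CR^{-1}}$ with explicit control of the Schwartz tails, and we need the kernel bound to be uniform on $M$; we would take both from the proof in \cite{IMW2026}. The second delicate point is that $\kappa_M$ is defined through a $\liminf$, so the Fourier-ratio bound holds only for large $R$. The interpolation exponents must therefore be arranged so that this bound enters with a positive power. We also must ensure that $u$, which is only a measure, has $\sum\|E_\lambda u\|^2<\infty$. This follows a posteriori from the uniform bound on the smoothed sums.
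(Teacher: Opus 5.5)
There is a genuine gap in your ``spatial side.'' You pair $u_R$ with the measure $u$ itself and try to show that $\langle u_R,u\rangle=\sum_\lambda\psi(\lambda/R)\|E_\lambda u\|_2^2$ is small. No such estimate is available. The pairing $\langle u_R,u\rangle=\int u_R\,d\bar u$ can only be bounded by $\|u_R\|_{L^\infty(E)}$ times the mass of $u$. The hypotheses give no $L^\infty$ (or $L^q$, $q>2$) control of $u_R$ that carries a factor of $|E^{CR^{-1}}|$.

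Worse, the target cannot hold for any nonzero $u$. Suppose $\langle u,\chi\rangle\ne0$ for some $\chi\in C^\infty(M)$. Since $\operatorname{supp}(P_Ru)\subset E^{C_0/R}$, for large $R$ we get
\[
\tfrac12|\langle u,\chi\rangle|\le|\langle P_Ru,\chi\rangle|\le\|P_Ru\|_{L^2(M)}\,|E^{C_0/R}|^{1/2}\,\|\chi\|_{L^\infty}\lesssim\|P_Ru\|_{L^2(M)}\,R^{\frac{k-d}{2}}.
\]
Hence $\|P_Ru\|_{L^2}^2=\sum_\lambda|\psi(\lambda/R)|^2\|E_\lambda u\|_2^2\gtrsim R^{d-k}\to\infty$. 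Concentration on a thin set forces the smoothed $\ell^2$ mass up, not down. Likewise $\sum_\lambda\|E_\lambda u\|_2^2=\infty$ for every nonzero measure supported in $E$, since $|E|=0$. So the quantity you want to tend to this sum cannot tend to zero, and the ``a posteriori'' finiteness you invoke has nothing to rest on.

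Your spectral side does not become small below the threshold either. Write $\kappa=\kappa_M(u)$ and $c_\lambda=|\psi(\lambda/R)|\,\|E_\lambda u\|_2$. Correctly normalized, the Fourier-ratio bound reads $\|c\|_{\ell^1}\le R^{\frac d2-\kappa+\varepsilon}\|c\|_{\ell^2}$, not $R^{\frac d2-\kappa+\varepsilon}A_{2,R}(u)$. Together with interpolation it yields only the growing upper bound $\|P_Ru\|_{L^2}=\|c\|_{\ell^2}\le R^{(\frac d2-\kappa+\varepsilon)\frac{p-2}{p}}\|c\|_{\ell^p}$.

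The missing idea is to play this upper bound against the volume factor by duality with a fixed bounded test function, which is exactly how the paper closes the argument. One gets $|\langle P_Ru,\chi\rangle|\lesssim R^{(\frac d2-\kappa+\varepsilon)\frac{p-2}{p}+\frac{k-d}{2}}\|\chi\|_{L^\infty}$. The exponent is negative precisely when $p<\frac{2(d-2(\kappa-\varepsilon))}{k-2(\kappa-\varepsilon)}$. Meanwhile $\langle P_Ru,\chi\rangle=\langle u,P_R\chi\rangle\to\langle u,\chi\rangle$, so $\langle u,\chi\rangle=0$ for all $\chi$ and $u=0$. Equivalently, the displayed lower bound and the spectral upper bound are incompatible for nonzero $u$ in that range of $p$.

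Two smaller corrections:
\begin{itemize}
\item Since $\widehat\psi$ is supported in some $[-A,A]$, you can write $P_R$ as an average of $\cos(t\sqrt{-\Delta}/R)$. Finite propagation speed then gives $\operatorname{supp}(P_Ru)\subset E^{A/R}$ exactly, so no Schwartz tails or local Weyl kernel bounds are needed.
\item The $\ell^p$-tail ``little-$o$'' is unnecessary, because the condition on $p$ is strict and the $\varepsilon$-loss is absorbed. It is also misapplied: the weights $\psi(\lambda/R)$ emphasize $\lambda\lesssim R$, so $\|c\|_{\ell^p}$ is merely $O(1)$.
\end{itemize}
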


\vskip.125in 

\begin{remark}
As in the Euclidean setting, when $\kappa_M(u)=0$ the exponent reduces to the classical synthesis threshold $p<\frac{2d}{k}$. As $\kappa_M(u)$ increases the synthesis range enlarges, and in the limiting case $\kappa_M(u)=\frac{k}{2}$ the bound formally becomes $p<\infty$, corresponding to maximal spectral concentration.
\end{remark}

\vskip.125in 

\subsection{Outline of the paper}

Section \ref{section:proofs} contains the proofs of the Euclidean and manifold synthesis theorems. The two arguments are parallel in spirit. In the Euclidean case, localization is achieved by convolution with an approximation to the identity. In the manifold case, the corresponding localization is realized by spectral multipliers and finite propagation speed for the wave equation.

\section{Proof of the main results}
\label{section:proofs}

In this section we prove the two synthesis theorems stated above. The manifold proof should be viewed as the spectral-projector analogue of the Euclidean argument, with wave propagation replacing convolution localization.

\subsection{Proof of Theorem \ref{thm:FR_synthesis}}

\begin{proof}
Let $\mu$ be a positive Borel measure on $\mathbb{R}^d$ with $\mathcal{P}^\alpha(\operatorname{supp}(\mu))<\infty$, and let $f \in L^2(\mu)$. Assume $\widehat{f\mu} \in L^p(\mathbb{R}^d)$ for some $p \ge 2$, and that
\[
0 \le \kappa(f) := \liminf_{R\to\infty} \frac{-\log FR_{\mu,R}(f)}{\log R} \le \frac{\alpha}{2}.
\]
We must show that $f \equiv 0$ whenever $p < \frac{2(d-2\kappa(f))}{\alpha-2\kappa(f)}$.

\medskip

\noindent\textbf{Step 1.  From Fourier ratio to $L^1$--$L^2$ estimate.}
Fix $\varepsilon>0$ small.  By definition of $\kappa(f)$, for all sufficiently large $R$,
\[
FR_{\mu,R}(f) \le R^{-(\kappa(f)-\varepsilon)}.
\]
Recall the definition
\[
FR_{\mu,R}(f)=\frac{X_{1,\mu,R}(f)}{X_{2,\mu,R}(f)},\qquad
X_{p,\mu,R}(f)=\Bigl(R^{-d}\int_{\mathbb{R}^d}|\widehat{f\mu*\psi_{R^{-1}}}(\xi)|^p\,d\xi\Bigr)^{1/p}.
\]
A direct computation gives
\[
X_{1,\mu,R}(f)=R^{-d}\|\widehat{f\mu*\psi_{R^{-1}}}\|_{L^1},\qquad
X_{2,\mu,R}(f)=R^{-d/2}\|\widehat{f\mu*\psi_{R^{-1}}}\|_{L^2},
\]
so that
\[
FR_{\mu,R}(f)=R^{-d/2}\,\frac{\|\widehat{f\mu*\psi_{R^{-1}}}\|_{L^1}}{\|\widehat{f\mu*\psi_{R^{-1}}}\|_{L^2}}.
\]
Inserting the upper bound for $FR_{\mu,R}(f)$ we obtain
\begin{equation}\label{eq:L1L2bound}
\|\widehat{f\mu*\psi_{R^{-1}}}\|_{L^1}
\le R^{\frac{d}{2}-(\kappa(f)-\varepsilon)}\,
\|\widehat{f\mu*\psi_{R^{-1}}}\|_{L^2}.
\end{equation}

\medskip

\noindent\textbf{Step 2.  Interpolation.}
Since $2\le p$, we use the standard $L^2$--$L^p$ interpolation inequality
\[
\|g\|_{L^2}\le \|g\|_{L^1}^{\theta}\|g\|_{L^p}^{1-\theta},
\qquad
\theta=\frac{p-2}{2(p-1)}\in(0,1),
\]
which satisfies $\frac12=\frac{\theta}{1}+\frac{1-\theta}{p}$.  
Apply this to $g=\widehat{f\mu*\psi_{R^{-1}}}$ and insert (\ref{eq:L1L2bound}) into the right‑hand side:
\[
\|\widehat{f\mu*\psi_{R^{-1}}}\|_{L^2}
\le \bigl(R^{\frac{d}{2}-(\kappa(f)-\varepsilon)}\|\widehat{f\mu*\psi_{R^{-1}}}\|_{L^2}\bigr)^{\theta}
\|\widehat{f\mu*\psi_{R^{-1}}}\|_{L^p}^{1-\theta}.
\]
Cancelling a factor $\|\widehat{f\mu*\psi_{R^{-1}}}\|_{L^2}^{\theta}$ from both sides yields
\[
\|\widehat{f\mu*\psi_{R^{-1}}}\|_{L^2}^{1-\theta}
\le R^{\bigl(\frac{d}{2}-(\kappa(f)-\varepsilon)\bigr)\theta}
\|\widehat{f\mu*\psi_{R^{-1}}}\|_{L^p}^{1-\theta}.
\]
Raising both sides to the power $1/(1-\theta)$ gives
\begin{equation}\label{eq:interpL2Lp}
\|\widehat{f\mu*\psi_{R^{-1}}}\|_{L^2}
\le R^{\bigl(\frac{d}{2}-(\kappa(f)-\varepsilon)\bigr)\frac{\theta}{1-\theta}}
\|\widehat{f\mu*\psi_{R^{-1}}}\|_{L^p}.
\end{equation}
Note that $\frac{\theta}{1-\theta}=\frac{p-2}{p}$.

\medskip

\noindent\textbf{Step 3.  Localisation in space.}
Let $\phi\in C_c^\infty(\mathbb{R}^d)$ be an arbitrary test function.  
Because $\psi_{R^{-1}}$ is an approximation of the identity, we have
\[
\langle f\mu,\phi\rangle = \lim_{R\to\infty}\langle f\mu*\psi_{R^{-1}},\phi\rangle.
\]
By Fatou's lemma,
\[
|\langle f\mu,\phi\rangle|
\le \liminf_{R\to\infty}|\langle f\mu*\psi_{R^{-1}},\phi\rangle|.
\]

The convolution $f\mu*\psi_{R^{-1}}$ is supported in the $R^{-1}$-neighbourhood of $\operatorname{supp}(f\mu)$, which is contained in $(\operatorname{supp}(\mu))^{R^{-1}}$.  
Define
\[
\Omega_R := (\operatorname{supp}(\mu))^{R^{-1}}\cap \operatorname{supp}(\phi).
\]
Then on $\operatorname{supp}(\phi)$ we have $f\mu*\psi_{R^{-1}}=0$ outside $\Omega_R$, and therefore
\[
\langle f\mu*\psi_{R^{-1}},\phi\rangle = \int_{\Omega_R} (f\mu*\psi_{R^{-1}})(x)\,\phi(x)\,dx.
\]

Using the Cauchy–Schwarz inequality,
\[
|\langle f\mu*\psi_{R^{-1}},\phi\rangle|
\le \|f\mu*\psi_{R^{-1}}\|_{L^2(\Omega_R)}\;\|\phi\|_{L^2(\Omega_R)}.
\]
Clearly $\|\phi\|_{L^2(\Omega_R)}\le |\Omega_R|^{1/2}\|\phi\|_{L^\infty}$.

\medskip

\noindent\textbf{Step 4.  Volume estimate.}
Since $\mathcal{P}^\alpha(\operatorname{supp}(\mu))<\infty$, Proposition 2.1 of \cite{SR14} gives
\[
|(\operatorname{supp}(\mu))^{R^{-1}}| \lesssim R^{\alpha-d}.
\]
Because $\Omega_R\subset (\operatorname{supp}(\mu))^{R^{-1}}$, the same estimate holds for $|\Omega_R|$:
\[
|\Omega_R|\lesssim R^{\alpha-d}.
\]

\medskip

\noindent\textbf{Step 5.  $L^2$ norm of the convolution.}
By Plancherel's theorem and the fact that $L^2$ norm on $\Omega_R$ does not exceed the full $L^2$ norm,
\[
\|f\mu*\psi_{R^{-1}}\|_{L^2(\Omega_R)}
\le \|f\mu*\psi_{R^{-1}}\|_{L^2(\mathbb{R}^d)}
= \|\widehat{f\mu*\psi_{R^{-1}}}\|_{L^2}.
\]
Insert the estimate (\ref{eq:interpL2Lp}) to obtain
\[
\|f\mu*\psi_{R^{-1}}\|_{L^2(\Omega_R)}
\le R^{\bigl(\frac{d}{2}-(\kappa(f)-\varepsilon)\bigr)\frac{p-2}{p}}
\|\widehat{f\mu*\psi_{R^{-1}}}\|_{L^p}.
\]

\medskip

\noindent\textbf{Step 6.  Bounding the $L^p$ norm of the Fourier transform.}
Because $\widehat{f\mu*\psi_{R^{-1}}}(\xi)=\widehat{f\mu}(\xi)\,\widehat{\psi}(\xi/R)$ and $|\widehat{\psi}|\le\|\widehat{\psi}\|_{L^\infty}$,
\[
\|\widehat{f\mu*\psi_{R^{-1}}}\|_{L^p}
\le \|\widehat{\psi}\|_{L^\infty}\,\|\widehat{f\mu}\|_{L^p}.
\]
The factor $\|\widehat{f\mu}\|_{L^p}$ is finite by hypothesis and does not depend on $R$.

\medskip

\noindent\textbf{Step 7.  Assembling the estimate.}
Combining the previous inequalities we get
\[
|\langle f\mu*\psi_{R^{-1}},\phi\rangle|
\lesssim
R^{\bigl(\frac{d}{2}-(\kappa(f)-\varepsilon)\bigr)\frac{p-2}{p}}\;
|\Omega_R|^{1/2}\;\|\phi\|_{L^\infty}.
\]
Using the volume estimate $|\Omega_R|^{1/2}\lesssim R^{\frac{\alpha-d}{2}}$,
\[
|\langle f\mu*\psi_{R^{-1}},\phi\rangle|
\lesssim
R^{\bigl(\frac{d}{2}-(\kappa(f)-\varepsilon)\bigr)\frac{p-2}{p}+\frac{\alpha-d}{2}}\;
\|\phi\|_{L^\infty}.
\]

The exponent of $R$ simplifies as follows:
\[
\Bigl(\frac{d}{2}-(\kappa(f)-\varepsilon)\Bigr)\frac{p-2}{p}+\frac{\alpha-d}{2}
= \frac{(d-2(\kappa(f)-\varepsilon))(p-2)}{2p}+\frac{\alpha-d}{2}.
\]
Multiplying by $2p$ gives
\[
(d-2(\kappa(f)-\varepsilon))(p-2)+p(\alpha-d)
= p\alpha - 2(\kappa(f)-\varepsilon)p -2d +4(\kappa(f)-\varepsilon).
\]
Rearranging,
\[
p\bigl(\alpha-2(\kappa(f)-\varepsilon)\bigr) -2\bigl(d-2(\kappa(f)-\varepsilon)\bigr).
\]
Hence the exponent is negative precisely when
\[
p < \frac{2\bigl(d-2(\kappa(f)-\varepsilon)\bigr)}{\alpha-2(\kappa(f)-\varepsilon)}.
\]

\medskip

\noindent\textbf{Step 8.  Passing to the limit.}
Assume now that
\[
p < \frac{2(d-2\kappa(f))}{\alpha-2\kappa(f)}.
\]
Since the right‑hand side is continuous in $\kappa$, we can choose $\varepsilon>0$ sufficiently small so that
\[
p < \frac{2\bigl(d-2(\kappa(f)-\varepsilon)\bigr)}{\alpha-2(\kappa(f)-\varepsilon)}.
\]
For this $\varepsilon$, the exponent of $R$ in the estimate for $|\langle f\mu*\psi_{R^{-1}},\phi\rangle|$ is negative. Consequently,
\[
\lim_{R\to\infty} |\langle f\mu*\psi_{R^{-1}},\phi\rangle| = 0.
\]

Because $|\langle f\mu,\phi\rangle| \le \liminf_{R\to\infty} |\langle f\mu*\psi_{R^{-1}},\phi\rangle|$, we obtain
\[
\langle f\mu,\phi\rangle = 0
\]
for every test function $\phi\in C_c^\infty(\mathbb{R}^d)$. This means $f\mu=0$ as a distribution, and since $f\in L^2(\mu)$, it follows that $f=0$ $\mu$-almost everywhere. In particular, $f\equiv 0$ on $\operatorname{supp}(\mu)$.

Thus $f$ is identically zero, completing the proof.
\end{proof}

\subsection{Proof of Theorem \ref{thm:manifold-FR-synthesis}}

\begin{proof}
Let $(M,g)$ be a compact $d$-dimensional Riemannian manifold without boundary. 
Let $u$ be a complex Radon measure supported in a compact set $E \subset M$ 
satisfying the $k$-dimensional neighborhood growth condition
\[
|E^\delta| \le C_E \delta^{d-k}
\]
for all sufficiently small $\delta>0$, where $|\cdot|$ denotes Riemannian volume.
Assume that
\[
\sum_{\lambda} \|E_\lambda u\|_{L^2(M)}^p < \infty
\]
for some $p \ge 2$, and that
\[
0 \le \kappa_M(u) := \liminf_{R\to\infty} \frac{-\log FR_{M,R}(u)}{\log R} \le \frac{k}{2}.
\]
We must show that $u \equiv 0$ whenever $p < \dfrac{2(d-2\kappa_M(u))}{k-2\kappa_M(u)}$.

\medskip

\noindent\textbf{Step 1.  Definition of the spectral multiplier.}
Recall that $E_\lambda$ denotes the orthogonal projection onto the $-\lambda^2$ 
eigenspace of the Laplace--Beltrami operator.  
Let $\psi \in \mathcal{S}(\mathbb{R})$ be even, with $\widehat{\psi}$ compactly supported 
and $\psi(0)=1$.  
Define the spectral multiplier
\[
P_R u := \sum_{\lambda} \psi(\lambda/R)\, E_\lambda u.
\]

Because $\psi$ is even and $\widehat{\psi}$ is compactly supported, the operator $P_R$ 
can be expressed as an integral of the wave propagator:
\[
P_R = \int_{-\infty}^{\infty} \widehat{\psi}(t) \cos(t\sqrt{-\Delta}/R)\,dt.
\]
By the finite propagation speed of the wave equation (see, e.g., \cite{Taylor} or 
\cite{Hormander}), for any $R>0$ the operator $\cos(t\sqrt{-\Delta}/R)$ maps a 
distribution supported in $E$ to a distribution supported in the $|t|/R$-neighborhood 
of $E$.  Since $\widehat{\psi}$ is compactly supported, say 
$\operatorname{supp}(\widehat{\psi}) \subset [-A,A]$, it follows that there exists a constant 
$C_0>0$ (in fact $C_0 = A$ works) such that
\[
\supp(P_R u) \subset E^{C_0/R}.
\]

\medskip

\noindent\textbf{Step 2.  From Fourier ratio to $\ell^1$--$\ell^2$ estimate.}
Fix $\varepsilon>0$ small.  By definition of $\kappa_M(u)$, for all sufficiently large $R$,
\[
FR_{M,R}(u) \le R^{-(\kappa_M(u)-\varepsilon)}.
\]
Recall the definition
\[
FR_{M,R}(u) = \frac{A_{1,R}(u)}{A_{2,R}(u)},
\]
where
\[
A_{1,R}(u) = R^{-d}\sum_{\lambda} |\psi(\lambda/R)|\,\|E_\lambda u\|_{L^2(M)},
\qquad
A_{2,R}(u) = R^{-d/2}\left( \sum_{\lambda} |\psi(\lambda/R)|^2\,\|E_\lambda u\|_{L^2(M)}^2 \right)^{1/2}.
\]
Set $c_\lambda := |\psi(\lambda/R)|\,\|E_\lambda u\|_{L^2(M)}$.  Then
\[
A_{1,R}(u) = R^{-d}\|c\|_{\ell^1},\qquad
A_{2,R}(u) = R^{-d/2}\|c\|_{\ell^2},
\]
so that
\[
FR_{M,R}(u) = R^{-d/2}\,\frac{\|c\|_{\ell^1}}{\|c\|_{\ell^2}}.
\]
Inserting the upper bound for $FR_{M,R}(u)$ gives
\begin{equation}\label{eq:manifold_L1L2}
\|c\|_{\ell^1} \le R^{\frac{d}{2}-(\kappa_M(u)-\varepsilon)} \|c\|_{\ell^2}.
\end{equation}

\medskip

\noindent\textbf{Step 3.  Interpolation.}
Since $2\le p$, we use the standard $\ell^2$--$\ell^p$ interpolation inequality
\[
\|c\|_{\ell^2} \le \|c\|_{\ell^1}^{\theta} \|c\|_{\ell^p}^{1-\theta},
\qquad
\theta = \frac{p-2}{2(p-1)}\in(0,1),
\]
which satisfies $\frac12 = \frac{\theta}{1} + \frac{1-\theta}{p}$.  
Insert (\ref{eq:manifold_L1L2}) into the right-hand side:
\[
\|c\|_{\ell^2} \le \bigl( R^{\frac{d}{2}-(\kappa_M(u)-\varepsilon)} \|c\|_{\ell^2} \bigr)^{\theta} \|c\|_{\ell^p}^{1-\theta}.
\]
Cancelling $\|c\|_{\ell^2}^{\theta}$ yields
\[
\|c\|_{\ell^2}^{1-\theta} \le R^{\bigl(\frac{d}{2}-(\kappa_M(u)-\varepsilon)\bigr)\theta} \|c\|_{\ell^p}^{1-\theta}.
\]
Raising both sides to the power $1/(1-\theta)$ gives
\begin{equation}\label{eq:manifold_interp}
\|c\|_{\ell^2} \le R^{\bigl(\frac{d}{2}-(\kappa_M(u)-\varepsilon)\bigr)\frac{\theta}{1-\theta}} \|c\|_{\ell^p},
\qquad
\frac{\theta}{1-\theta} = \frac{p-2}{p}.
\end{equation}

\medskip

\noindent\textbf{Step 4.  Relating $\|c\|_{\ell^2}$ to $P_R u$.}
By orthogonality of the spectral projections,
\[
\|P_R u\|_{L^2(M)} = \left( \sum_{\lambda} |\psi(\lambda/R)|^2 \|E_\lambda u\|_{L^2(M)}^2 \right)^{1/2} = \|c\|_{\ell^2}.
\]

For the $\ell^p$ norm, since $|\psi| \le \|\psi\|_{L^\infty}$,
\[
\|c\|_{\ell^p} = \left( \sum_{\lambda} |\psi(\lambda/R)|^p \|E_\lambda u\|_{L^2(M)}^p \right)^{1/p}
\le \|\psi\|_{L^\infty} \left( \sum_{\lambda} \|E_\lambda u\|_{L^2(M)}^p \right)^{1/p}.
\]
The right-hand side is finite by hypothesis and independent of $R$.

Combining these with (\ref{eq:manifold_interp}) we obtain
\begin{equation}\label{eq:manifold_PRestimate}
\|P_R u\|_{L^2(M)} \lesssim R^{\bigl(\frac{d}{2}-(\kappa_M(u)-\varepsilon)\bigr)\frac{p-2}{p}}.
\end{equation}

\medskip

\noindent\textbf{Step 5.  Localization in space and volume estimate.}
Let $\chi \in C^\infty(M)$ be an arbitrary test function.  
By the support property of $P_R u$,
\[
\langle P_R u, \chi\rangle = \int_{E^{C_0/R}} (P_R u)(x)\,\chi(x)\,dx.
\]
Applying Cauchy--Schwarz,
\[
|\langle P_R u, \chi\rangle| \le \|P_R u\|_{L^2(M)} \|\chi\|_{L^2(E^{C_0/R})}.
\]
Clearly $\|\chi\|_{L^2(E^{C_0/R})} \le |E^{C_0/R}|^{1/2} \|\chi\|_{L^\infty(M)}$.

Since $E$ satisfies the $k$-dimensional neighborhood growth condition,
\[
|E^{C_0/R}| \lesssim (C_0/R)^{d-k} \lesssim R^{k-d}.
\]
Hence
\[
\|\chi\|_{L^2(E^{C_0/R})} \lesssim R^{\frac{k-d}{2}} \|\chi\|_{L^\infty(M)}.
\]

\medskip

\noindent\textbf{Step 6.  Assembling the estimate.}
Combining (\ref{eq:manifold_PRestimate}) with the volume estimate gives
\[
|\langle P_R u, \chi\rangle| \lesssim R^{\bigl(\frac{d}{2}-(\kappa_M(u)-\varepsilon)\bigr)\frac{p-2}{p}} \cdot R^{\frac{k-d}{2}} \cdot \|\chi\|_{L^\infty(M)}.
\]

The exponent of $R$ simplifies as follows.  Compute
\[
\bigl(\tfrac{d}{2}-(\kappa_M(u)-\varepsilon)\bigr)\tfrac{p-2}{p} + \tfrac{k-d}{2}
= \tfrac{(d-2(\kappa_M(u)-\varepsilon))(p-2)}{2p} + \tfrac{k-d}{2}.
\]
Multiplying by $2p$ gives
\[
(d-2(\kappa_M(u)-\varepsilon))(p-2) + p(k-d)
= p\bigl(k-2(\kappa_M(u)-\varepsilon)\bigr) - 2\bigl(d-2(\kappa_M(u)-\varepsilon)\bigr).
\]
Thus the exponent is negative precisely when
\[
p < \frac{2\bigl(d-2(\kappa_M(u)-\varepsilon)\bigr)}{k-2(\kappa_M(u)-\varepsilon)}.
\]

\medskip

\noindent\textbf{Step 7.  Passing to the limit.}
Assume now that
\[
p < \frac{2(d-2\kappa_M(u))}{k-2\kappa_M(u)}.
\]
Since the right-hand side is continuous in $\kappa_M(u)$, we can choose $\varepsilon>0$ 
sufficiently small so that
\[
p < \frac{2\bigl(d-2(\kappa_M(u)-\varepsilon)\bigr)}{k-2(\kappa_M(u)-\varepsilon)}.
\]
For this $\varepsilon$, the exponent of $R$ in the estimate for $|\langle P_R u,\chi\rangle|$ is negative, and consequently
\[
\lim_{R\to\infty} \langle P_R u,\chi\rangle = 0.
\]

\medskip

\noindent\textbf{Step 8.  Convergence of $P_R$ to the identity.}
Since $\psi(0)=1$ and $\psi$ is continuous, for any smooth function $\chi \in C^\infty(M)$ 
we have
\[
\|P_R \chi - \chi\|_{L^2(M)} \to 0 \quad \text{as } R\to\infty.
\]
(This is a standard property of spectral multipliers approximating the identity; 
see, e.g., \cite[Proposition 2.3]{IMW2026}.)  Consequently, for any Radon measure $u$,
\[
\langle P_R u,\chi\rangle = \langle u, P_R \chi\rangle \to \langle u, \chi\rangle,
\]
since $P_R \chi \to \chi$ in $C^\infty(M)$ (and hence in the weak-$*$ topology).

\medskip

\noindent\textbf{Step 9.  Conclusion.}
From Steps 7 and 8 we obtain $\langle u,\chi\rangle = 0$ for every test function 
$\chi \in C^\infty(M)$.  Therefore $u = 0$ as a distribution, and since $u$ is a Radon measure, 
it follows that $u \equiv 0$.  This completes the proof.
\end{proof}

\vskip.125in 

\section{Concluding remarks}

We have introduced the Fourier Ratio as a quantitative complexity parameter that, together with geometric size, governs spectral synthesis thresholds in both Euclidean space and compact Riemannian manifolds. The results establish a common framework in which classical dimension-driven thresholds are refined by a scale-dependent measure of spectral concentration. The Fourier ratio decay exponent $\kappa$ interpolates continuously between diffuse and highly structured Fourier supports, and the synthesis exponent becomes an explicit function of $\kappa$.

The sharpness examples for hypersurfaces with constant relative nullity suggest that $\kappa$ captures geometric curvature information; a systematic exploration of this connection in higher codimensions is left for future work \cite{D26}. More broadly, the Fourier Ratio may serve as a useful invariant in other settings where refined analytic estimates are required, such as uncertainty principles, restriction theory, and inverse problems.

\end{document}